\numberwithin{equation}{section}
\newtheorem{Theorem}{Theorem}[section]
\newtheorem{Corollary}[Theorem]{Corollary}
 { \theoremstyle{definition}
\newtheorem{Example}[Theorem]{Example}
\newtheorem{Remark}[Theorem]{Remark} }
\begin{document}

\newcommand{\arXivNumber}{1901.09951}

\renewcommand{\thefootnote}{}

\renewcommand{\PaperNumber}{058}

\FirstPageHeading

\ShortArticleName{Linear Differential Systems with Small Coefficients: Various Types of Solvability}

\ArticleName{Linear Differential Systems with Small Coefficients:\\ Various Types of Solvability and their Verification\footnote{This paper is a~contribution to the Special Issue on Algebraic Methods in Dynamical Systems. The full collection is available at \href{https://www.emis.de/journals/SIGMA/AMDS2018.html}{https://www.emis.de/journals/SIGMA/AMDS2018.html}}}

\Author{Moulay A.~BARKATOU~$^\dag$ and Renat R.~GONTSOV~$^{\ddag\S}$}

\AuthorNameForHeading{M.A.~Barkatou and R.R.~Gontsov}

\Address{$^\dag$~Laboratoire XLIM (CNRS UMR 72 52), D\'epartement Math\'ematiques-Informatique,\\
\hphantom{$^\dag$}~Universit\'e de Limoges, Facult\'e des Sciences et Techniques, 123 avenue Albert Thomas,\\
\hphantom{$^\dag$}~F-87060 LIMOGES Cedex, France}
\EmailD{\href{mailto:email@address}{moulay.barkatou@unilim.fr}}

\Address{$^\ddag$~Institute for Information Transmission Problems RAS,\\
\hphantom{$^\ddag$}~Bolshoy Karetny per.~19, build.~1, Moscow 127051, Russia}
\Address{$^\S$~Moscow Power Engineering Institute, Krasnokazarmennaya 14, Moscow 111250, Russia}
\EmailD{\href{mailto:email@address}{gontsovrr@gmail.com}}

\ArticleDates{Received January 30, 2019, in final form July 31, 2019; Published online August 09, 2019}

\Abstract{We study the problem of solvability of linear differential systems with small coefficients in the Liouvillian sense (or, by generalized quadratures). For a general system, this problem is equivalent to that of solvability of the Lie algebra of the differential Galois group of the system. However, dependence of this Lie algebra on the system coefficients remains unknown. We show that for the particular class of systems with non-resonant irregular singular points that have {\it sufficiently small} coefficient matrices, the problem is reduced to that of solvability of the explicit Lie algebra generated by the coefficient matrices. This extends the corresponding Ilyashenko--Khovanskii theorem obtained for linear differential systems with Fuchsian singular points. We also give some examples illustrating the practical verification of the presented criteria of solvability by using general procedures implemented in Maple.}

\Keywords{linear differential system; non-resonant irregular singularity; formal exponents; solvability by generalized quadratures; triangularizability of a set of matrices}

\Classification{34M03; 34M25; 34M35; 34M50}

\renewcommand{\thefootnote}{\arabic{footnote}}
\setcounter{footnote}{0}

\section{Introduction}

Solvability of linear differential equations and systems in finite terms is a classical question of
differential Galois theory. It begins with the problem of solvability by generalized quadratures
(or, in the Liouvillian sense, which means the representability of all solutions of an equation in
terms of elementary or algebraic functions and their integrals, speaking informally) in the 1830's
in the works of Liouville on second order equations. Generalized in 1910 by Mordukhai--Boltovskii for
$n$th order equations, this was independently developed in a quite different way by Picard and Vessiot
who connected to an equation (system) a group, turned out to be a~linear algebraic group, called the
{\it differential Galois group}. They showed that solvability of the equation (system) in the Liouvillian
sense depends entirely on properties of this group or its Lie algebra. Namely, solvability holds if and
only if the identity component of the differential Galois group is solvable (which is equivalent to the
solvability of the Lie algebra of the group). Later Kolchin completed this theory by considering other,
more particular, types of solvability and their dependence on properties of the differential Galois group.

General methods for computing (in theory, at least) the differential Galois group have been proposed by several authors \cite{CoSi99,Fe15a,Hr02,Ho07b} in the last past decades. However, those methods have an extremely high computational complexity and are therefore not practical for nontrivial problems. Moreover it is not clear how the differential Galois group of a given system depends on the coefficients of the latter. More recently an algorithm for computing the Lie algebra of the differential Galois group has been proposed in~\cite{BCDW16}. This algorithm has a rather reasonable complexity, compared to the previous methods for computing the differential Galois group, but like the former it does not allow to clarify the relationship between the coefficients of the input system and its differential Lie algebra.

Though the main difficulty is that the differential Galois group (and its Lie algebra) of a~specific equation
depends on its coefficients very implicitly, in some cases it is possible to leave aside these implicit objects
(which are hard to compute) and obtain an answer to the question of solvability in terms of objects that are
determined by the coefficients of the equation explicitly. Here are some examples:
\begin{itemize}\itemsep=0pt
\item the list of hypergeometric equations solvable in the Liouvillian sense is completely known
(Scwharz--Kimura's list consisting of the fifteen families \cite{Ki}, see also \cite[Chapter~12, Section~1]{Zol2});
\item the Bessel equation is solvable in the Liouvillian sense if and only if its parameter is a~half-integer
(see \cite[Section~2.8]{MR} or \cite[Chapter~11, Section~1]{Zol2});
\item the equation $y''=(az^2+bz+c)y$, $a\ne0$, is solvable in the Liouvillian sense if and only if $(4ac-b^2)/4a^{3/2}$
is an odd integer (see~\cite{MMN});
\item more generally, an equation $y''=P(z)y$ with a polynomial $P$, is not solvable in the Liouvillian sense
if $\deg P$ is odd (see \cite{Ka}) whereas in the $(2n+1)$-dimensional space of such equations with $\deg P=2n$,
equations solvable in the Liouvillian sense form a union of countable number of algebraic varieties of dimension
$n+1$ each (see \cite{Vi,Zol1}).
\end{itemize}

A rather unexpected and maybe less known subclass of linear differential systems whose various types of solvability
can be checked in terms of explicit input data, is formed by systems with sufficiently small matrix coefficients.
The first result of this kind concerns Fuchsian systems. It was obtained by Ilyashenko, Khovanskii and published in
1974 in Russian, and is revised in~\cite{Kh1}, \cite[Chapter~6, Section~2.3]{Kh2}. This claims that for Fuchsian $(p\times p)$-systems
\begin{gather*}
\frac{{\rm d}y}{{\rm d}z}=\left(\sum_{i=1}^n\frac{B_i}{z-a_i}\right)y, \qquad y(z)\in{\mathbb C}^p, \qquad B_i\in{\rm Mat}(p,{\mathbb C}),
\end{gather*}
with fixed singular points $a_1,\ldots,a_n\in\mathbb C$ (and maybe $\infty$, if $\sum\limits_{i=1}^nB_i\ne0$) there exists an
$\varepsilon=\varepsilon(p,n)>0$ such that a criterion of solvability in the Liouvillian sense for a Fuchsian system
with $\|B_i\|<\varepsilon$ takes the following form: {\it the system is solvable if and only if all the matrices~$B_i$ can be
simultaneously reduced to a triangular form}. Using Kolchin's results, Ilyashenko and Khovanskii also obtained criteria for
other types of solvability of a Fuchsian system with small residue matrices, in terms of these matrices. Though, all these
criteria still remain not quite explicit in the sense that the $\varepsilon$ is expressed implicitely. This situation was
refined in~\cite{VG} where it was proved that for applying the above criterion of solvability it is sufficient for the eigenvalues of the matrices~$B_i$, called the {\it exponents} of the system, to be small enough rather than for the matrices themselves (this refinement had been conjectured earlier by Andrey Bolibrukh, see remark in~\cite[Section~6.2.3]{Kh1}). Moreover, an explicit bound for the exponents was given. Then similar criteria of solvability were obtained in \cite{GV} for a (non-resonant) irregular differential system with small {\it formal exponents}.

In the next section we recall these criteria for various types of solvability of a linear differential system with small (formal) exponents (Theorems~\ref{theorem1}, \ref{theorem1'}, \ref{theorem2} and \ref{theorem3}), previously giving necessary definitions. Then, in Section~\ref{section3}, we prove that the formal exponents at a non-resonant irregular singular point $z=a$ of a system
\begin{gather}\label{syst1}
\frac{{\rm d}y}{{\rm d}z}=B(z) y, \qquad B(z)=\frac1{(z-a)^{r+1}}\bigl(B^{(0)}+B^{(1)}(z-a)+\cdots+B^{(r)}(z-a)^r+\cdots\bigr)
\end{gather}
are small, if the coefficient matrices $B^{(1)},\ldots,B^{(r)}$ of the principal part of $B(z)$ at $a$ are small enough, while the leading term $B^{(0)}$ belongs to some compact subset of ${\rm Mat}(p,{\mathbb C})$. This allows us to formulate criteria of solvability inside an open set of non-resonant irregular differential systems (Theorem~\ref{theorem4}), looking at the problem theoretically, like Ilyashenko and Khovanskii did in the Fuchsian case.

From a practical point of view, as soon as we have a system whose (formal) exponents satisfy some numerical restrictions, which can be checked algorithmically, the question on solvability of the {\it implicit} Lie algebra of the differential Galois group of the system is reduced to the question on solvability of the {\it explicit} Lie algebra generated by the system coefficient matrices. This practical part of the problem is studied in Section~\ref{section4}, where the corresponding algorithms and examples are provided.

\section{Various types of solvability}\label{section2}

We consider a linear differential $(p\times p)$-system
\begin{gather}\label{syst2}
\frac{{\rm d}y}{{\rm d}z}=B(z) y,\qquad y(z)\in{\mathbb C}^p,
\end{gather}
defined on the whole Riemann sphere $\overline{\mathbb C}$, with the meromorphic coefficient matrix $B$ (whose entries are
thus rational functions).

One says that a solution $y$ of the system (\ref{syst2}) is {\it Liouvillian} if there is a tower of elementary
extensions
\begin{gather*}
{\mathbb C}(z)=F_0\subset F_1\subset\cdots\subset F_m
\end{gather*}
of the field ${\mathbb C}(z)$ of rational functions such that all components of $y$ belong to $F_m$. Here each
$F_{i+1}=F_i\langle x_i\rangle$ is a field extension of $F_i$ by an element $x_i$, which is either:
\begin{itemize}	\itemsep=0pt
	\item[--] an integral of some element in $F_i$, or
	\item[--] an exponential of integral of some element in $F_i$, or
	\item[--] algebraic over $F_i$.
\end{itemize}
The system is said to be solvable in the {\it Liouvillian sense} (or, by {\it generalized quadratures}),
if all its solutions are Liouvillian. There are other types of solvability studied by Kolchin \cite{Ko} from the point of
view of differential Galois theory, which are defined in analogy to solvability by generalized quadratures, and we leave
formal definitions to the reader. These are:
\begin{enumerate}	\itemsep=0pt
	\item[1)] solvability by integrals and algebraic functions;
	\item[2)] solvability by integrals;
	\item[3)] solvability by exponentials of integrals and algebraic functions;
	\item[4)] solvability by algebraic functions.
\end{enumerate}

We leave aside the differential Galois group of the system and the description of the above types of solvability in terms of
this group, since in the case of systems having small (formal) exponents we will deal with, this description is provided in
terms of the coefficient matrix. Let us recall some basic theory.

Assume that the system (\ref{syst2}) has singular points $a_1,\ldots,a_n\in\mathbb C$ of Poincar\'e rank $r_1,\ldots,r_n$
respectively and, for the simplicity of exposition, that the point $\infty$ is non-singular (though we will consider some example with a singular point at infinity; see Example~\ref{example1} below). This means that the coefficient matrix $B$ has the form
\begin{gather}\label{matrix2}
B(z)=\sum_{i=1}^n\left(\frac{B^{(0)}_i}{(z-a_i)^{r_i+1}}+\dots+\frac{B^{(r_i)}_i}{z-a_i}\right), \qquad
\sum_{i=1}^nB^{(r_i)}_i=0
\end{gather}
($B$ contains a polynomial part and/or $\sum\limits_{i=1}^nB^{(r_i)}_i\ne0$, if $\infty$ is a singular point).
\medskip

{\bf Genericity assumption.} We also restrict ourselves to the generic case, when each singular point $a_i$ is either
{\it Fuchsian} or {\it irregular non-resonant}. In the first case the Poincar\'e rank $r_i$ equals zero, while in the second
case it is positive and the eigenvalues of the leading term $B^{(0)}_i$ are {\it pairwise distinct}. Such a system will be
called {\it generic}.
\medskip

Near a Fuchsian singular point $z=a_i$, the system possesses a fundamental matrix $Y$ of the form
\begin{gather*}
Y(z)=U(z)(z-a_i)^A(z-a_i)^E,
\end{gather*}
where the matrix $U$ is holomorphically invertible at $a_i$ (that is, $\det U(a_i)\ne0$), $A$ is a diagonal integer matrix,
$E$ is a triangular matrix. The eigenvalues $\lambda_i^1,\ldots,\lambda_i^p$ of the triangular matrix $A+E$ are called the
{\it exponents} of the system at a Fuchsian singular point $a_i$ and they coincide with the eigenvalues of the residue matrix
$B^{(r_i)}_i=B^{(0)}_i$.

Near a non-resonant irregular singular point $z=a_i$, the system possesses a formal fundamental matrix $\widehat Y$ of the form
\begin{gather}\label{ffm}
\widehat Y(z)=\widehat F(z)(z-a_i)^{\Lambda} {\rm e}^{Q(1/(z-a_i))},
\end{gather}
where $\Lambda={\rm diag}\big(\lambda_i^1,\ldots,\lambda_i^p\big)$ is a diagonal matrix, $\widehat F$ is a matrix formal Taylor series
in $(z-a_i)$, with $\det\widehat F(a_i)\ne0$, and $Q$ is a polynomial. The numbers $\lambda_i^1,\ldots,\lambda_i^p$ are called
the {\it formal exponents} of the system at the non-resonant irregular singular point $a_i$, and the algorithm of their calculation will be recalled in the next section.

Now we give some theorems which follow from \cite{Go, GV} and which we will be based on further.

\begin{Theorem}\label{theorem1}Let the exponents of the generic system \eqref{syst2}, \eqref{matrix2} at all singular points satisfy the condition
\begin{gather}\label{cond1}
\operatorname{Re}\lambda_i^j>-1/n(p-1), \qquad i=1,\ldots,n, \qquad j=1,\ldots,p,
\end{gather}
furthermore, for every Fuchsian singular point $a_i$ each difference $\lambda_i^j-\lambda_i^k\not\in{\mathbb Q} \setminus{\mathbb Z}$. Then the system is solvable by generalized quadratures if and only if there exists a constant matrix $C\in{\rm GL}(p,\mathbb C)$ such that all the matrices $CB_i^{(l)}C^{-1}$ are upper triangular.
\end{Theorem}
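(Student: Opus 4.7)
The plan is to treat the two implications separately: the ``triangularizable $\Rightarrow$ Liouvillian'' direction is routine, and the hard direction proceeds through differential Galois theory plus a Fuchs-type estimate controlled by the small-exponents hypothesis.

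For sufficiency, suppose $C\in{\rm GL}(p,\mathbb{C})$ simultaneously triangularizes every $B^{(l)}_i$. Then $CB(z)C^{-1}$ is upper triangular, so the substitution $x=Cy$ turns \eqref{syst2} into an upper triangular system. Such a system is solved by successive quadratures: the last scalar component satisfies a first-order linear equation whose solution is an exponential of an integral, and each preceding component is recovered from a linear inhomogeneous scalar equation requiring one further exponential of an integral and one integration. Thus every solution lies in a tower of elementary extensions of $\mathbb{C}(z)$, so the system is Liouvillian.

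For necessity I would start from Picard--Vessiot theory: solvability by generalized quadratures is equivalent to solvability of the identity component $G^0$ of the differential Galois group $G\subset{\rm GL}(p,\mathbb{C})$. The Lie--Kolchin theorem then gives a constant $C\in{\rm GL}(p,\mathbb{C})$ such that $CG^{0}C^{-1}$ is contained in the upper triangular group, and hence preserves the standard complete flag $\{0\}\subset V_1\subset\cdots\subset V_{p-1}\subset \mathbb{C}^p$. Pulling this flag back through a fundamental matrix produces a flag of $G^0$-invariant horizontal subspaces. The goal is to show that the associated flag of local sheaves glues into a flag of holomorphic subbundles of the \emph{trivial} bundle on $\overline{\mathbb{C}}$; once this is done, the connection $\mathrm{d}-B(z)\,\mathrm{d}z$ preserves this flag, and in the frame $C$ the matrices $B^{(l)}_i$ are all simultaneously upper triangular.

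The crux is therefore the extension of the $G^0$-invariant flag across the singular points and the proof that the resulting subbundles are trivial. At a Fuchsian $a_i$ the non-resonance assumption $\lambda_i^j-\lambda_i^k\notin\mathbb{Q}\setminus\mathbb{Z}$ prevents logarithmic obstructions and lets one refine the local fundamental matrix $U(z)(z-a_i)^A(z-a_i)^E$ into a form block-triangular with respect to the flag. At a non-resonant irregular $a_i$ one argues similarly with the formal fundamental matrix \eqref{ffm}, using that the diagonal $Q$ and $\Lambda$ commute with the flag and that the Stokes matrices lie in $G^0$, hence in the triangular group. The quantitative heart of the argument is a Bolibrukh-type inequality for the degrees of the candidate subbundles: the assumption $\operatorname{Re}\lambda_i^j>-1/n(p-1)$ is exactly calibrated so that, summing over the $n$ singular points and over the ranks $1,\ldots,p-1$ of a complete flag, each subbundle degree is forced to be $>-1$, hence nonnegative, and thus all subbundles must be holomorphically trivial. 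This last estimate is the main obstacle and is where I would invoke the results of \cite{Go,GV} as black boxes; given it, the desired simultaneous triangularization of the $B^{(l)}_i$ is immediate.
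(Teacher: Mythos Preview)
The paper does not itself prove this theorem; it records it as following from \cite{Go,GV}. Your outline is broadly in the spirit of those references---Lie--Kolchin on $G^0$, an invariant line/flag, local extension across the $a_i$, a Bolibrukh-type degree estimate forcing triviality, then induction on $p$---but one step is misdiagnosed, and it is not a minor one.

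The hypothesis $\lambda_i^j-\lambda_i^k\notin\mathbb{Q}\setminus\mathbb{Z}$ at Fuchsian points is \emph{not} a device to ``prevent logarithmic obstructions'' in the local normal form. Logarithms arise from \emph{integer} differences, and those are expressly permitted here (see the Remark immediately following the theorem). What the condition actually does is bridge the gap between $G^0$-invariance and $G$-invariance, which your sketch glosses over: a $G^0$-eigenline pulled back through a fundamental matrix is a priori only a \emph{multivalued} line on $\overline{\mathbb C}\setminus\{a_1,\dots,a_n\}$ with finite monodromy orbit (since $[G:G^0]<\infty$), and that is not enough to define a subbundle of the trivial bundle. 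If the line were genuinely not $G$-invariant, some local monodromy at a Fuchsian $a_i$ would permute a finite set of $G^0$-eigenlines nontrivially, which forces a ratio $e^{2\pi i(\lambda_i^j-\lambda_i^k)}$ of its eigenvalues to be a nontrivial root of unity, i.e.\ $\lambda_i^j-\lambda_i^k\in\mathbb{Q}\setminus\mathbb{Z}$. Excluding precisely this is what lets \cite{Go,GV} promote the $G^0$-invariant line to a $G$-invariant (hence monodromy-invariant, hence single-valued) one; only after that does the degree count under \eqref{cond1} and the inductive reduction to a $(p{-}1)\times(p{-}1)$ system proceed as you indicate.
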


\begin{Remark} By the genericity assumption, each {\it irregular} singular point of the system is non-resonant, whereas
{\it Fuchsian} singular points are not assumed to be non-resonant: integer differences $\lambda_i^j-\lambda_i^k$ of the
exponents at a Fuchsian singular point are not forbidden.
\end{Remark}

\begin{Theorem}\label{theorem1'}
The assertion of Theorem~{\rm \ref{theorem1}} remains valid if the condition \eqref{cond1} is replaced by
\begin{gather*}
\bigl|\operatorname{Re}\lambda_i^j-\operatorname{Re}\lambda_i^k\bigr|<1/n(p-1), \qquad i=1,\ldots,n, \qquad j,k=1,\ldots,p.
\end{gather*}
\end{Theorem}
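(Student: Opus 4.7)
The natural strategy is to reduce Theorem~\ref{theorem1'} to Theorem~\ref{theorem1} via a scalar gauge transformation that uniformly translates the real parts of the formal exponents at each singular point. I would set $\tilde y(z)=f(z)y(z)$ with $f(z)=\prod_{i=1}^{n}(z-a_i)^{-\mu_i}$ for real constants $\mu_i$ to be chosen. A direct computation gives $\tilde y'=(B+(f'/f)I)\tilde y$, so the new coefficient matrix is
\[
\tilde B(z)=B(z)-\sum_{i=1}^{n}\frac{\mu_i}{z-a_i}\,I.
\]
Thus the only change in the data~\eqref{matrix2} is $\tilde B_i^{(r_i)}=B_i^{(r_i)}-\mu_i I$, while $\tilde B_i^{(l)}=B_i^{(l)}$ for $l<r_i$; correspondingly, near each $a_i$ the formal fundamental matrix~\eqref{ffm} acquires an extra factor $(z-a_i)^{-\mu_i}$, so the formal exponents undergo the uniform shift $\tilde\lambda_i^j=\lambda_i^j-\mu_i$.

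Next I would take $\mu_i:=\min_j\operatorname{Re}\lambda_i^j$. Under the hypothesis of Theorem~\ref{theorem1'}, this places each $\operatorname{Re}\tilde\lambda_i^j$ in the interval $[0,\,1/n(p-1))$, so the shifted system satisfies condition~\eqref{cond1}. Before appealing to Theorem~\ref{theorem1}, I would verify that the remaining hypotheses survive: the leading coefficients $B_i^{(0)}$ at each irregular singular point are untouched, so non-resonance is preserved and the Poincar\'e rank is unchanged; and at each Fuchsian singular point the differences $\tilde\lambda_i^j-\tilde\lambda_i^k=\lambda_i^j-\lambda_i^k$ still avoid $\mathbb{Q}\setminus\mathbb{Z}$.

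Theorem~\ref{theorem1} now yields: the shifted system is solvable by generalized quadratures if and only if the matrices $\{\tilde B_i^{(l)}\}$ admit a common upper triangularization by some $C\in{\rm GL}(p,\mathbb{C})$. The final observation is that both sides of this equivalence are unaffected by the gauge transformation. Solvability by generalized quadratures is preserved because $f$ and $1/f$ are Liouvillian (each is an exponential of an integral of a rational function), so an original solution $y$ is Liouvillian iff $\tilde y=fy$ is. Simultaneous triangularizability is preserved because $\{\tilde B_i^{(l)}\}$ differs from $\{B_i^{(l)}\}$ only by subtracting scalar multiples of the identity from some of the matrices, which does not affect the existence of a common triangularizing conjugation. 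Combining the two equivalences gives Theorem~\ref{theorem1'}.

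I do not anticipate a serious obstacle beyond careful bookkeeping: the whole argument rests on verifying that the scalar gauge transformation translates the formal exponents at each singularity uniformly, and that this is the only effect on the data entering Theorem~\ref{theorem1}. The one small check worth emphasising is that the choice $\mu_i=\min_j\operatorname{Re}\lambda_i^j$ is always admissible because the width of $\{\operatorname{Re}\lambda_i^j\}_j$ is strictly bounded by $1/n(p-1)$, so the translated real parts automatically fit into the interval required by~\eqref{cond1}.
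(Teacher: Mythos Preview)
The paper does not itself prove Theorem~\ref{theorem1'}; the result is quoted from~\cite{Go,GV}, so there is no in-paper argument to compare against. Your reduction via a scalar gauge is the natural route and most of your checks are correct, but there is one point that is more than bookkeeping. The form~\eqref{matrix2}, to which Theorem~\ref{theorem1} is stated to apply, carries the side condition $\sum_i B_i^{(r_i)}=0$ (non-singularity of~$\infty$). After your transformation one has $\sum_i\tilde B_i^{(r_i)}=-\bigl(\sum_i\mu_i\bigr)I$, so with the choice $\mu_i=\min_j\operatorname{Re}\lambda_i^j$ the point~$\infty$ typically becomes a new Fuchsian singularity of the shifted system and Theorem~\ref{theorem1}, as stated here for $n$ finite singular points, no longer applies directly. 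Passing to $n+1$ singular points does not help either, since the $p$-fold exponent $\sum_i\mu_i$ at~$\infty$ can be arbitrarily negative.

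The fix is to impose the extra constraint $\sum_i\mu_i=0$, which \emph{can} always be met under your hypothesis. From $(\det Y)'/\det Y=\operatorname{tr}B$ one reads off $\sum_j\lambda_i^j=\operatorname{tr}B_i^{(r_i)}$, hence $\sum_{i,j}\operatorname{Re}\lambda_i^j=0$ by~\eqref{matrix2}. Setting $m_i=\min_j\operatorname{Re}\lambda_i^j$ and $M_i=\max_j\operatorname{Re}\lambda_i^j$, the hypothesis gives $M_i-m_i<1/n(p-1)$ while the trace relation forces $\sum_i m_i\le 0\le\sum_i M_i$; together these yield $\sum_i m_i>-1/(p-1)$. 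Now take $\mu_i=m_i-\tfrac1n\sum_k m_k$: then $\sum_i\mu_i=0$, so $\infty$ stays non-singular, and
\[
\operatorname{Re}\tilde\lambda_i^j=\operatorname{Re}\lambda_i^j-m_i+\tfrac1n\sum_k m_k\ \ge\ \tfrac1n\sum_k m_k\ >\ -\,1/n(p-1),
\]
so condition~\eqref{cond1} holds for the shifted system. With this adjustment your argument goes through unchanged.
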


According to the behaviour of solutions of a linear differential system near its irregular singular point, the system~(\ref{syst2}) with at least one irregular singular point is not solvable by integrals and algebraic functions.
For solvability by exponentials of integrals and algebraic functions, the following criterion holds.

\begin{Theorem}\label{theorem2}
Under the assumptions of Theorem~{\rm \ref{theorem1}}, the generic system~\eqref{syst2}, \eqref{matrix2} is solvable by exponentials of integrals and algebraic functions\footnote{Under the assumptions of the theorem, this type of solvability is equivalent to solvability by exponentials of integrals.} if and only if there exists a constant matrix $C\in{\rm GL}(p,\mathbb C)$ such that all the matrices $CB_i^{(l)}C^{-1}$ are diagonal.
\end{Theorem}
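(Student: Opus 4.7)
The plan is to bootstrap from Theorem~\ref{theorem1} by invoking Kolchin's group-theoretic criterion for this stronger type of solvability, and then transfer the resulting structural information on the Galois Lie algebra back to the Lie algebra generated by the coefficient matrices.

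First I would dispense with the easy ``if'' direction. If some $C\in{\rm GL}(p,\mathbb{C})$ simultaneously diagonalizes the $B_i^{(l)}$, then the change of variable $\tilde y=Cy$ splits the system~(\ref{syst2}),~(\ref{matrix2}) into $p$ uncoupled scalar equations ${\rm d}\tilde y_j/{\rm d}z=b_j(z)\tilde y_j$; each is solved by $\exp\int b_j(z)\,{\rm d}z$, and every component of $y=C^{-1}\tilde y$ is a $\mathbb{C}$-linear combination of such exponentials of integrals.

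For the converse, I would first observe that solvability by exponentials of integrals and algebraic functions is a special case of solvability in the Liouvillian sense, so Theorem~\ref{theorem1} applies and produces a $C$ simultaneously triangularizing all $B_i^{(l)}$. After this preliminary conjugation, assume the $B_i^{(l)}$ are already upper triangular and let $\mathfrak{h}$ denote the Lie subalgebra of ${\rm Mat}(p,\mathbb{C})$ they generate. By Kolchin's theorem~\cite{Ko}, the hypothesis of Theorem~\ref{theorem2} is equivalent to the identity component $G^0$ of the differential Galois group $G$ being a torus, which in Lie-algebraic terms means the Lie algebra $\mathfrak{g}$ of $G$ is abelian and consists of semisimple elements. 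The remaining task reduces to transferring these two properties from $\mathfrak{g}$ to $\mathfrak{h}$: once this is done, the $B_i^{(l)}$ form a commuting family of diagonalizable upper triangular matrices, which can be simultaneously diagonalized by a further conjugation $C'$, and $C'C$ is the diagonalizer sought.

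The hard part will be precisely this Lie-algebraic transfer from $\mathfrak{g}$ to $\mathfrak{h}$, and I would model it on the argument of~\cite{VG, GV} used to prove Theorem~\ref{theorem1}. Under the small exponent hypothesis~(\ref{cond1}) each local monodromy at a Fuchsian point equals $\exp\bigl(2\pi{\rm i}B_i^{(0)}\bigr)$ and each formal monodromy at a non-resonant irregular point equals $\exp(2\pi{\rm i}\Lambda_i)$, with $\Lambda_i$ the diagonal of formal exponents from~(\ref{ffm}), while the Stokes generators and the polynomial part $Q$ are controlled by the remaining $B_i^{(l)}$. The map $\exp(2\pi{\rm i}\,\cdot\,)$ being injective on the range fixed by~(\ref{cond1}), commutativity and semisimplicity of the generators of $G^0$ pull back to the same properties of the corresponding explicit matrices. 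The delicate point, not required for mere solvability in Theorem~\ref{theorem1} but essential here, is to verify that this abelian, semisimple structure survives the integer-translation ambiguities inherent in passing from a Galois generator back to its logarithm; this is what ultimately forces $\mathfrak{h}$ itself to be abelian and composed of semisimple elements, and hence simultaneously diagonalizable.
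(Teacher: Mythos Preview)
The paper does not actually prove Theorem~\ref{theorem2}; it is quoted from~\cite{Go,GV} without proof, so there is no in-paper argument to compare against directly. Assessing your outline on its own merits: the ``if'' direction and the overall strategy for the converse (Theorem~\ref{theorem1} $+$ Kolchin $\Rightarrow$ $G^0$ is a torus $\Rightarrow$ transfer to $\mathfrak{h}$) are correct in spirit, but the transfer mechanism you sketch has a genuine gap.

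First, injectivity of $\exp(2\pi{\rm i}\,\cdot\,)$ on the domain cut out by~(\ref{cond1}) does \emph{not} by itself pull back commutativity: from $\exp(2\pi{\rm i}A)\exp(2\pi{\rm i}B)=\exp(2\pi{\rm i}B)\exp(2\pi{\rm i}A)$ one cannot deduce $AB=BA$ merely from injectivity of $\exp$ on a set containing $A$ and~$B$; and condition~(\ref{cond1}) even allows integer eigenvalue differences at Fuchsian points, so eigenspaces of $B_i^{(0)}$ and of $\exp(2\pi{\rm i}B_i^{(0)})$ need not coincide. Second, and more seriously, at a non-resonant irregular point $a_i$ the local Galois generators (via Ramis' density theorem) are the formal monodromy $\exp(2\pi{\rm i}\Lambda_i)$, the exponential torus attached to $Q$, and the Stokes matrices; none of these is $\exp(2\pi{\rm i}B_i^{(l)})$ for $l<r_i$, so your proposed pull-back has no well-defined inputs there. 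The key fact you omit is that Stokes matrices are unipotent elements of $G$, hence trivial once $G^0$ is a torus; this triviality is what actually drives the irregular part of the argument. The proofs in~\cite{GV,Go} of Theorem~\ref{theorem1} do not pull back commutation through $\exp$: they show that a $G$-invariant line yields a solution of the form $f\exp\bigl(\int g\bigr)$ with $f$ algebraic, and the bound~(\ref{cond1}) forces $f$ to be constant, producing a constant common eigenvector of all the~$B_i^{(l)}$, after which one passes to the quotient. For Theorem~\ref{theorem2}, $G^0$ being a torus gives $p$ independent $G$-invariant lines (a splitting rather than a flag), and the same small-exponent argument makes each line constant, which is exactly the simultaneous constant diagonalization. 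Your outline becomes correct if you replace the ``$\exp$ transfers commutativity'' step by this invariant-line construction.
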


For a Fuchsian system, we additionally have the following criteria of solvability.

\begin{Theorem}\label{theorem3}
If all the singular points of the system \eqref{syst2}, \eqref{matrix2} are Fuchsian then, under the assumptions of Theorem~{\rm \ref{theorem1}}, this system is
\begin{enumerate}\itemsep=0pt
	\item[$1)$] solvable by integrals and algebraic functions\footnote{Under the assumptions of the theorem, this type of solvability is equivalent to solvability by integrals and radicals.} if and only if there exists
	$C\in{\rm GL}(p,\mathbb C)$ such that all the matrices $CB^{(0)}_iC^{-1}$ are triangular and the eigenvalues of each $B^{(0)}_i$ are rational numbers differing by an integer;
	\item[$2)$] solvable by integrals if and only if there exists $C\in{\rm GL}(p,\mathbb C)$ such that all the matrices $CB^{(0)}_iC^{-1}$ are triangular and their eigenvalues are equal to zero;
	\item[$3)$] solvable by algebraic functions\footnote{Under the assumptions of the theorem, this type of solvability is equivalent to solvability by radicals.} if and only if there exists $C\in{\rm GL}(p,\mathbb C)$ such that all the matrices $CB^{(0)}_iC^{-1}$ are diagonal and the eigenvalues of each $B^{(0)}_i$ are rational numbers differing by an integer.
\end{enumerate}
\end{Theorem}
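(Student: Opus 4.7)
The plan is to prove all three parts by combining the simultaneous triangular/diagonal reductions already available from Theorems~\ref{theorem1} and~\ref{theorem2} with Kolchin's Galois-theoretic characterizations of the three types of solvability, then reading off the conditions on the exponents.

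Since each type of solvability listed in the statement implies solvability by generalized quadratures, Theorem~\ref{theorem1} furnishes a constant $C\in{\rm GL}(p,\mathbb C)$ with all $CB_i^{(0)}C^{-1}$ simultaneously upper triangular. For part~(3), algebraic solvability additionally implies solvability by exponentials of integrals and algebraic functions, so Theorem~\ref{theorem2} strengthens this to simultaneous diagonality of the $CB_i^{(0)}C^{-1}$. After this conjugation the exponents $\lambda_i^j$ are precisely the diagonal entries of the $B_i^{(0)}$, and in the same basis the monodromy matrices $M_i$ at each Fuchsian point are upper triangular (resp.\ diagonal) with diagonal entries ${\rm e}^{2\pi{\rm i}\lambda_i^j}$. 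Since the system is Fuchsian, the differential Galois group $G$ coincides with the Zariski closure of $\langle M_1,\ldots,M_n\rangle$.

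I now invoke Kolchin's characterizations: solvability by integrals, by integrals and algebraic functions, and by algebraic functions correspond respectively to $G$ being unipotent, $G^0$ being unipotent, and $G$ being finite. For part~(2), $G$ unipotent forces each ${\rm e}^{2\pi{\rm i}\lambda_i^j}=1$, hence $\lambda_i^j\in\mathbb Z$; combining the smallness condition $\operatorname{Re}\lambda_i^j>-1/n(p-1)>-1$ with the Fuchs relation $\sum_{i,j}\lambda_i^j=0$ (a consequence of $\sum_i B_i^{(0)}=0$, since $\infty$ is non-singular) then forces every $\lambda_i^j=0$. For parts~(1) and~(3), unipotency of $G^0$ (resp.\ finiteness of $G$) is equivalent to the diagonal parts of the $M_i$ generating a finite subgroup of the torus, which is in turn equivalent to $\lambda_i^j\in\mathbb Q$ for all $i,j$; under the no-rational-non-integer-difference hypothesis of Theorem~\ref{theorem1}, rationality of all exponents at a given $a_i$ automatically forces their pairwise differences to be integers. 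The converse direction is obtained by exhibiting local solutions of the prescribed shape: a diagonal system with rational exponents has algebraic solutions $\prod_i(z-a_i)^{\lambda_i^j}$, establishing part~(3); the triangular case with rational integer-differing eigenvalues adds only integrals to these algebraic building blocks, establishing part~(1); and the triangular case with vanishing exponents gives a unipotent monodromy, whence solutions obtained by pure integration, establishing part~(2).

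The main obstacle, and the place where the argument is most delicate, lies in the interplay between the Jordan structure of the residues and that of the monodromy at Fuchsian points with integer-differing eigenvalues, where the local factor $(z-a_i)^{A+E}$ in the fundamental matrix can resonantly mix the diagonal entries through its non-trivial $E$-part. Verifying that the stated shape of the $B_i^{(0)}$ is both necessary and sufficient for the prescribed Galois-theoretic structure of $G$ requires careful bookkeeping of how these local resonance contributions generate the unipotent part of the global Zariski closure, and in particular for part~(3) one must ensure that the diagonal basis furnished by Theorem~\ref{theorem2} is compatible with the finite-group structure of $G$ imposed by algebraic solvability.
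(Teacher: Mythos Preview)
The paper does not actually prove Theorem~\ref{theorem3}: it is stated in Section~\ref{section2} among the results that ``follow from \cite{Go, GV}'' and is used without further argument. So there is no in-paper proof to compare against.

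That said, your outline is essentially the natural proof one expects behind those citations: reduce to a global triangular (resp.\ diagonal) form via Theorems~\ref{theorem1} and~\ref{theorem2}, use the fact that for a Fuchsian system the differential Galois group is the Zariski closure of the monodromy, and then read off Kolchin's criteria on the explicit monodromy matrices. The step in part~(2) combining $\lambda_i^j\in\mathbb Z$, the bound $\operatorname{Re}\lambda_i^j>-1$, and the trace identity $\sum_{i,j}\lambda_i^j=\operatorname{tr}\bigl(\sum_i B_i^{(0)}\bigr)=0$ to force all exponents to vanish is exactly right, as is the observation that the standing hypothesis $\lambda_i^j-\lambda_i^k\notin\mathbb Q\setminus\mathbb Z$ upgrades rationality of the exponents to integer differences in parts~(1) and~(3).

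Your final paragraph, however, flags an obstacle that does not exist in your own setup, and you should remove it rather than leave it as a caveat. Once the constant conjugation by $C$ has put the \emph{global} coefficient matrix $\sum_i B_i^{(0)}/(z-a_i)$ into upper-triangular (resp.\ diagonal) form, you may choose a global fundamental matrix $Y(z)$ that is itself upper-triangular (resp.\ diagonal): its $j$th diagonal entry solves the scalar equation $y'=\bigl(\sum_i\lambda_i^j/(z-a_i)\bigr)y$, so it equals $\prod_i(z-a_i)^{\lambda_i^j}$ and has monodromy precisely $e^{2\pi i\lambda_i^j}$ around each $a_i$. Hence the diagonals of the $M_i$ are exactly what you claim, independently of any local resonance. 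The factor $(z-a_i)^{A}(z-a_i)^{E}$ from the Levelt normal form is only relevant when one lacks a global triangular structure and must piece together local data; here you already have the global structure, so the delicate bookkeeping you anticipate never enters.
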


\begin{Example}\label{example1}Let us give an example which illustrates that to expect the equivalence of solvability in the Liouvillian sense to the triangularizability of a system {\it via} a constant gauge transformation, one indeed need to put some restrictions on the (formal) system exponents.

Consider an equation
\begin{gather*}
\frac{{\rm d}^2u}{{\rm d}z^2}=\big(z^2+c\big)u,
\end{gather*}
where $c$ is a constant. Written in the form of a $(2\times2)$-system with respect to the vector of unknowns $y=(u,du/dz)^{\top}$, this becomes
\begin{gather*}
\frac{{\rm d}y}{{\rm d}z}=\left(\begin{matrix} 0 & 1 \\
 z^2+c & 0
 \end{matrix}\right)y,
\end{gather*}
or, in the variable $t=1/z$,
\begin{gather*}
\frac{{\rm d}y}{{\rm d}t}=\left(\begin{matrix} 0 & -1/t^2 \\
 -1/t^4-c/t^2 & 0
 \end{matrix}\right)y.
\end{gather*}
This is a system with the unique singular point $t=0$ of Poincar\'e rank~$3$ which is resonant though, since the leading term of the coefficient matrix is nilpotent. Thus we make the transformation $\tilde y=t^{{\rm diag}(0,1)}y$, under which the coefficient matrix~$B(t)$ of the system is changed as follows:
\begin{gather*}
A(t)=t^{{\rm diag}(0,1)}B(t) t^{-{\rm diag}(0,1)}+\frac1t {\rm diag}(0,1)=
\left(\begin{matrix} 0 & -1/t^3 \\
-1/t^3-c/t & 0
 \end{matrix}\right)+\left(\begin{matrix} 0 & 0 \\
 0 & 1/t
\end{matrix}\right) \\
\hphantom{A(t)}{} =\left(\begin{matrix} 0 & -1/t^3 \\
 -1/t^3 & 0
\end{matrix}\right)+\left(\begin{matrix} 0 & 0 \\
-c/t & 1/t
\end{matrix}\right)=\frac1{t^3}\big(A^{(0)}+A^{(2)}t^2\big).
\end{gather*}
The transformed system has two singular points: {\it non-resonant} irregular $t=0$ of Poincar\'e rank~$2$ (the eigenvalues of the leading term $A^{(0)}$ are $\pm1$) and Fuchsian $t=\infty$. The eigenvectors of the~$A^{(0)}$ are $e_1\mp e_2$, $e_{1,2}$ being the standard basic vectors $(1,0)^{\top}$, $(0,1)^{\top}$ of ${\mathbb C}^2$, whereas
\begin{gather*}
A^{(2)}(e_1-e_2)=(-c-1)e_2, \qquad A^{(2)}(e_1+e_2)=(-c+1)e_2.
\end{gather*}
Hence, the matrices $A^{(0)}$, $A^{(2)}$ are simultaneously triangularizable (have a common eigenvector) if and only if $c=1$ or $c=-1$. However, due to~\cite{MMN} (see the third example in our Introduction) the initial scalar equation and thus the transformed system, are solvable in the Liouvillian sense not only when $c=\pm1$ but also in the case of any other odd integer $c$. It remains to note that the exponents of the system at the Fuchsian point $t=\infty$ are $0$ and $-1$ (the eigenvalues of the matrix $-A^{(2)}$), that is, one of them does not satisfy the condition~(\ref{cond1}) of Theorem~\ref{theorem1} which in this case ($p=2$, $n=2$) looks like
\begin{gather*}
\operatorname{Re}\lambda_i^j>-1/2, \qquad i,j=1,2.
\end{gather*}
\end{Example}

\section{On the smallness of formal exponents}\label{section3}

Let us denote by $W\subset{\rm Mat}(p,{\mathbb C})$ a $($Zariski open$)$ subset of $(p\times p)$-matrices having pairwise
distinct eigenvalues. Assuming all the Poincar\'e ranks of the system (\ref{syst2}), (\ref{matrix2}) to be positive we regard it as a point in the set
\begin{gather*}
{\cal S}_{a_1,\ldots,a_n}^{r_1,\ldots,r_n}=\bigl\{ \big(B^{(0)}_1,\ldots,B^{(r_1)}_1\big),\ldots,\big(B^{(0)}_n,\ldots,B^{(r_n)}_n\big) \,|\, B^{(0)}_1,\ldots,B^{(0)}_n \in W \bigr\}.
\end{gather*}

Here we prove the following statement generalizing the Ilyashenko--Khovanskii criterion of solvability to the non-resonant
irregular case.

\begin{Theorem}\label{theorem4}
For any open disc $D\Subset W$ there exists $\delta=\delta(D,p,n)>0$ such that in a set
\begin{gather*}
{\cal S}_{a_1,\ldots,a_n}^{r_1,\ldots,r_n}(D,\delta)=\bigl\{ \big(B^{(0)}_i,\ldots,B^{(r_i)}_i\big)_{i=1}^n \in
{\cal S}_{a_1,\ldots,a_n}^{r_1,\ldots,r_n} \,|\, B^{(0)}_i \in D, \big\|B^{(1)}_i\big\|<\delta,\ldots,\big\|B^{(r_i)}_i\big\|<\delta\bigr\}
\end{gather*}
of systems with fixed singular points of fixed positive Poincar\'e ranks and sufficiently small non-leading coefficient matrices, systems solvable by generalized quadratures are those and only those whose matrices $B^{(0)}_i,\ldots,B^{(r_i)}_i$, $i=1,\ldots,n$, are simultaneously conjugate to triangular ones.
\end{Theorem}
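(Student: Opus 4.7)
The plan is to reduce Theorem~\ref{theorem4} to Theorem~\ref{theorem1} by showing that when the non-leading coefficient matrices are small and the leading ones stay in a compact subset of~$W$, the formal exponents are automatically small enough to satisfy the hypothesis~\eqref{cond1}. Since all singular points of the system are irregular (all Poincar\'e ranks are positive), there are no Fuchsian points and hence the extra rationality condition on integer differences of exponents in Theorem~\ref{theorem1} is vacuous.

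First I would isolate and prove the following key statement: given the compact set $\overline{D}\subset W$, there exists a function $\delta\mapsto\varepsilon(\delta)$ with $\varepsilon(\delta)\to 0$ as $\delta\to 0$ such that for any non-resonant irregular singularity of Poincar\'e rank $r\ge 1$ with leading term $B^{(0)}\in\overline{D}$ and remaining coefficients satisfying $\|B^{(1)}\|,\ldots,\|B^{(r)}\|<\delta$, every formal exponent $\lambda^j$ satisfies $|\lambda^j|<\varepsilon(\delta)$. The proof goes via the classical formal reduction algorithm at a non-resonant irregular singularity: one first diagonalizes $B^{(0)}$, which can be done uniformly on $\overline{D}$ since this compact set lies in the locus of matrices with pairwise distinct eigenvalues; then a finite sequence of shearing and gauge transformations recursively reduces the system to the diagonal normal form~\eqref{ffm}, and the formal exponents appear as the diagonal entries of the coefficient of $(z-a)^{-1}$ in this normal form.

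The crucial observation is that each step of the reduction involves only rational operations in the entries of the current coefficient matrices, with denominators that are products of differences of eigenvalues of $B^{(0)}$. These denominators remain bounded away from zero over the compact set $\overline{D}$, so the formal exponents are jointly continuous functions of $(B^{(0)},B^{(1)},\ldots,B^{(r)})$ on $\overline{D}\times{\rm Mat}(p,{\mathbb C})^r$. A direct inspection of the recursion shows that when $B^{(1)}=\cdots=B^{(r)}=0$ the system is already in a diagonal form whose $(z-a)^{-1}$-coefficient vanishes, so all formal exponents are then zero. Continuity together with compactness of $\overline{D}$ then yields $\varepsilon(\delta)\to 0$ uniformly in $B^{(0)}\in\overline{D}$, which is the claimed lemma.

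Given this lemma, Theorem~\ref{theorem4} follows immediately: choose $\delta=\delta(D,p,n)$ so small that $\varepsilon(\delta)<1/n(p-1)$. Then every system in ${\cal S}_{a_1,\ldots,a_n}^{r_1,\ldots,r_n}(D,\delta)$ satisfies $\operatorname{Re}\lambda_i^j>-1/n(p-1)$ at every singular point, so condition~\eqref{cond1} holds, and Theorem~\ref{theorem1} provides the asserted equivalence between solvability by generalized quadratures and simultaneous triangularizability of all the coefficient matrices. The main obstacle I expect is the careful bookkeeping in the formal reduction algorithm to make precise that the formal exponents depend continuously on $(B^{(0)},\ldots,B^{(r)})$ uniformly over $\overline{D}$; once this is set up, the theorem is just compactness followed by direct invocation of Theorem~\ref{theorem1}.
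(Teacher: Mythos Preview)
Your proposal is correct and follows the same overall strategy as the paper: reduce to Theorem~\ref{theorem1} via an auxiliary lemma (the paper's Theorem~\ref{theorem5}) asserting that the formal exponents at a non-resonant irregular point can be made arbitrarily small by taking the non-leading coefficients small, uniformly for $B^{(0)}$ in a compact subset of~$W$; then choose $\delta$ so that~\eqref{cond1} is met and invoke Theorem~\ref{theorem1}. Your observation that the Fuchsian rationality condition is vacuous here is also how the paper implicitly treats the matter.

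The one substantive difference is in how the key lemma is proved. You argue by continuity and compactness: the formal exponents are rational functions of $\big(B^{(0)},\ldots,B^{(r)}\big)$ with denominators built from eigenvalue gaps of $B^{(0)}$, hence continuous on $\overline D\times{\rm Mat}(p,\mathbb C)^r$, and they vanish when $B^{(1)}=\cdots=B^{(r)}=0$. The paper instead writes out the splitting-lemma recursion explicitly and derives a quantitative bound $|\lambda^j|\le\delta\bigl(1+P_{r-1}(\delta/\rho)\bigr)$, where $\rho$ is the minimal eigenvalue gap of $B^{(0)}$ and $P_{r-1}$ is an explicit polynomial of degree $r-1$. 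Your soft argument is perfectly adequate for Theorem~\ref{theorem4}; the paper's explicit estimate buys more, namely the asymptotics of $\delta$ in terms of $\varepsilon$ and $\rho$ (their Remark after Theorem~\ref{theorem5}) and, crucially, the two corollaries that follow, where one needs to know exactly how the bound degenerates (or does not) when $r=1$ or when $B^{(1)}=\cdots=B^{(r)}=0$. A pure continuity argument would not yield those refinements without redoing the explicit computation.
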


This theorem will clearly follow from Theorem~\ref{theorem5} below and Theorem~\ref{theorem1}.

\begin{Theorem}\label{theorem5}
For any open disc $D\Subset W$ and any $\varepsilon>0$ there exists $\delta=\delta(D,\varepsilon)>0$ such that a local system~\eqref{syst1} with
\begin{gather*}
B^{(0)}\in D, \quad \big\|B^{(1)}\big\|<\delta, \quad \ldots, \quad \big\|B^{(r)}\big\|<\delta, \qquad r>0,
\end{gather*}
has the formal exponents $\lambda^1,\ldots,\lambda^p$ at the non-resonant irregular singular point $z=a$ satisfying the condition $|\lambda^j|<\varepsilon$, $j=1,\ldots,p$.
\end{Theorem}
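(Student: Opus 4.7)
The plan is to exhibit the formal exponents $\lambda^{1},\ldots,\lambda^{p}$ as a continuous function of the coefficient tuple $(B^{(0)},B^{(1)},\ldots,B^{(r)})$ on the open set where $B^{(0)}$ has pairwise distinct eigenvalues, and then to observe that this function vanishes identically when $B^{(1)}=\cdots=B^{(r)}=0$. Granted these two facts, uniform continuity on the compact set $\overline{D}\times\{\|B^{(k)}\|\le 1\}_{k=1}^{r}$ immediately yields a $\delta=\delta(D,\varepsilon)>0$ enforcing $|\lambda^{j}|<\varepsilon$, as required.

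For the continuous dependence I would make explicit the formal normal form algorithm at a non-resonant irregular point. After a constant conjugation we may assume $B^{(0)}=\mathrm{diag}(\beta_{1},\ldots,\beta_{p})$ with pairwise distinct $\beta_{i}$; then one seeks a formal series
\[
\widehat F(z)=I+\sum_{k\ge 1}P_{k}(z-a)^{k}
\]
together with diagonal matrices $D_{1},\ldots,D_{r-1}$ and $\Lambda$ so that the gauge-transformed coefficient matrix takes the normal form
\[
\widetilde B(z)=\frac{1}{(z-a)^{r+1}}\Bigl(B^{(0)}+\sum_{k=1}^{r-1}D_{k}(z-a)^{k}\Bigr)+\frac{\Lambda}{z-a}.
\]
Equating coefficients of $(z-a)^{m}$ in the identity $B\widehat F-\widehat F'=\widehat F\widetilde B$ produces, for $m=1,\ldots,r$, relations of the form
\[
[P_{m},B^{(0)}]=B^{(m)}-C_{m}+R_{m}\bigl(B^{(1)},\ldots,B^{(m-1)},P_{1},\ldots,P_{m-1},D_{1},\ldots,D_{m-1}\bigr),
\]
where $C_{m}=D_{m}$ for $m<r$ and $C_{r}=\Lambda$, and $R_{m}$ is an explicit polynomial in its arguments. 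Since $\mathrm{ad}(B^{(0)})$ is invertible on off-diagonal matrices (because the $\beta_{i}$ are distinct), the diagonal part of the identity determines $C_{m}$ and the off-diagonal part determines $P_{m}$.

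Unwinding this recursion expresses $\Lambda$, and hence each $\lambda^{j}$, as a polynomial in the entries of $B^{(0)},B^{(1)},\ldots,B^{(r)}$ with denominators consisting only of products of differences $\beta_{i}-\beta_{j}$. On the precompact set $\overline{D}\subset W$ those differences are bounded below in modulus, so $\Lambda$ depends continuously on the coefficient matrices there. Substituting $B^{(1)}=\cdots=B^{(r)}=0$ and running the recursion gives $P_{k}=0$, $D_{k}=0$, and $\Lambda=0$, which is the required vanishing at the zero-tuple. The main obstacle is the bookkeeping step: verifying that $\Lambda$ really involves only $B^{(0)},\ldots,B^{(r)}$ and not any higher coefficients, and tracking the denominators carefully enough to control continuity uniformly on $\overline{D}$. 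The ambiguity in the choice of the initial diagonalizer of $B^{(0)}$ is immaterial, since the multi-set $\{\lambda^{j}\}$ is a conjugation invariant.
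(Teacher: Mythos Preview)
Your argument is correct and rests on the same underlying computation as the paper's proof: both run the splitting-lemma recursion at a non-resonant irregular point, observe that the diagonal part $\Lambda=A^{(r)}$ is determined by $B^{(0)},\ldots,B^{(r)}$ alone (the shift term $(k-r)T^{(k-r)}$ vanishes for $k\le r$), and that the only denominators arising are products of eigenvalue gaps $\beta_i-\beta_j$. Where you diverge is in how you extract smallness of $\lambda^j$ from this recursion. The paper proceeds quantitatively: it bounds $\|H^{(k)}\|$ by an explicit induction, obtaining $\|H^{(k)}\|\le\delta\,P_{k-1}(\delta/\rho)$ with $\rho=\min_{i\ne j}|\beta_i-\beta_j|$ and $P_{k-1}$ a polynomial of degree $k-1$, so that $|\lambda^j|\le\delta\bigl(1+P_{r-1}(\delta/\rho)\bigr)$. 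You instead invoke a soft argument: the map $(B^{(0)},\ldots,B^{(r)})\mapsto\{\lambda^j\}$ is continuous on $\overline D\times\{\|B^{(k)}\|\le 1\}$ and vanishes on the slice $B^{(1)}=\cdots=B^{(r)}=0$, so uniform continuity on this compact set produces the required $\delta$.

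Both routes are valid. Yours is shorter and avoids the inductive bookkeeping, but it is purely existential: you get \emph{some} $\delta(D,\varepsilon)$ with no handle on its size. The paper's explicit estimate is what makes the subsequent Remark possible (the asymptotics $\delta=O(\varepsilon)$ when $\rho$ stays bounded away from zero, and $\delta=O(\varepsilon^{1/r}\rho^{(r-1)/r})$ otherwise), and it feeds directly into the effective form of Corollary~3.3 where no compactness restriction on the leading terms is needed. One small point worth tightening in your write-up: the continuous dependence of $\Lambda$ on the \emph{original} $(B^{(0)},\ldots,B^{(r)})$ goes through a diagonalizer $C=C(B^{(0)})$ that can only be chosen continuously \emph{locally} on $W$; you should say that local continuity plus conjugation-invariance of the multiset $\{\lambda^j\}$ gives global continuity, and then apply compactness of $\overline D$.
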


\begin{proof} To prove the statement of the theorem, let us first recall the procedure of a formal transformation of the
system~(\ref{syst1}) to a diagonal form, the {\it splitting lemma} (from \cite[Section~11]{Wa}, and we assume that $a=0$ for simplicity).

Under a transformation $y=T(z)\tilde y$, the system is changed as follows
\begin{gather*}
\frac{{\rm d}\tilde y}{{\rm d}z}=A(z) \tilde y, \qquad A(z)=T^{-1}B(z)T-T^{-1}\frac{{\rm d}T}{{\rm d}z},
\end{gather*}
and a new coefficient matrix $A$ has the Laurent expansion
\begin{gather*}
A(z)=\frac1{z^{r+1}}\bigl(A^{(0)}+A^{(1)}z+\cdots\bigr).
\end{gather*}
One may always assume that $T(z)=I+T^{(1)}z+\cdots$ and $B^{(0)}=A^{(0)}={\rm diag}\big(\alpha^1,\ldots,\alpha^p\big)$. Then gathering the coefficients at each power $z^k$ from the relation
\begin{gather*}
T(z) z^{r+1}A(z)-z^{r+1}B(z) T(z)+z^{r+1}\frac{{\rm d}T}{{\rm d}z}=0,
\end{gather*}
we obtain
\begin{gather*}
T^{(k)}B^{(0)}-B^{(0)}T^{(k)}+A^{(k)}-B^{(k)}+\sum_{l=1}^{k-1}\big(T^{(l)}A^{(k-l)}-B^{(k-l)}T^{(l)}\big)+(k-r)T^{(k-r)}=0
\end{gather*}
(the last summand equals zero for $k\leqslant r$). There are two sets of unknowns in this system of matrix equations:
$A^{(k)}=\big(A^{(k)}_{ij}\big)$ and $T^{(k)}=\big(T^{(k)}_{ij}\big)$. Requiring all the $A^{(k)}$'s to be diagonal and assuming
$A^{(0)},\ldots,A^{(k-1)}$ and $T^{(0)},\ldots,T^{(k-1)}$ to be already found, one first obtains
\begin{gather*}
A^{(k)}_{jj}=B^{(k)}_{jj}-H^{(k)}_{jj},
\end{gather*}
where $H^{(k)}=\sum\limits_{l=1}^{k-1}\big(T^{(l)}A^{(k-l)}-B^{(k-l)}T^{(l)}\big)+(k-r)T^{(k-r)}$, and then
\begin{gather*}
T^{(k)}_{ij}=\frac1{\alpha^j-\alpha^i}\bigl(B^{(k)}_{ij}-H^{(k)}_{ij}\bigr),\qquad i\ne j, \qquad T^{(k)}_{jj}=0.
\end{gather*}

Since a diagonal system is integrated explicitly, one can see that the factors of the formal fundamental matrix
$\widehat Y(z)=\widehat F(z) z^{\Lambda}{\rm e}^{Q(1/z)}$ (see (\ref{ffm})) of the local system (\ref{syst1}) (we are
especially interested in $\Lambda$) are
\begin{gather*}
\widehat F(z)=T(z){\rm e}^{A^{(r+1)}z+A^{(r+2)}\frac{z^2}2+\cdots}, \qquad \Lambda=A^{(r)},\\
Q(1/z)=-\frac{A^{(0)}}{rz^r}-\frac{A^{(1)}}{(r-1)z^{r-1}}-\cdots-\frac{A^{(r-1)}}z.
\end{gather*}
Therefore, the formal exponents of the system (the eigenvalues of $\Lambda$) are
\begin{gather*}
\lambda^j=A^{(r)}_{jj}=B^{(r)}_{jj}-H^{(r)}_{jj}.
\end{gather*}
This implies the estimate
\begin{gather*}
\big|\lambda^j\big|\leqslant\big\|A^{(r)}\big\|\leqslant\big\|B^{(r)}\big\|+\big\|H^{(r)}\big\|
\end{gather*}
(we will use, for example, the matrix 1-norm $\|\cdot\|_1$ here).

Now we are ready to prove the smallness of the formal exponents $\lambda^j$'s
in the case of small coefficients $B^{(1)},\ldots,B^{(r)}$. For this, we should prove the smallness of $H^{(1)},\ldots,H^{(r)}$. Denoting $\rho=\min\limits_{i\ne j}\big|\alpha^j-\alpha^i\big|>0$ we will have
\begin{gather*}
\big|T^{(k)}_{ij}\big|\leqslant\frac1{\rho}\bigl(\big|B^{(k)}_{ij}\big|+\big|H^{(k)}_{ij}\big|\bigr)\quad\Longrightarrow
\quad \big\|T^{(k)}\big\|\leqslant\frac1{\rho}\bigl(\big\|B^{(k)}\big\|+\big\|H^{(k)}\big\|\bigr),
\end{gather*}
hence
\begin{gather*}
\big\|H^{(k)}\big\|\leqslant\sum_{l=1}^{k-1}\big\|T^{(l)}\big\|\bigl(\big\|A^{(k-l)}\big\|+\big\|B^{(k-l)}\big\|\bigr)\\
\hphantom{\big\|H^{(k)}\big\|}{} \leqslant
\sum_{l=1}^{k-1}\frac1{\rho}\bigl(\big\|B^{(l)}\big\|+\big\|H^{(l)}\big\|\bigr)\bigl(2\big\|B^{(k-l)}\big\|+\big\|H^{(k-l)}\big\|\bigr).
\end{gather*}
Assume that all $\big\|B^{(k)}\big\|<\delta$, $k=1,\ldots,r$. Then
\begin{gather}
\big\|H^{(k)}\big\|\leqslant\sum_{l=1}^{k-1}\frac1{\rho}\big(2\delta^2+\delta\bigl(2\big\|H^{(l)}\big\|+\big\|H^{(k-l)}\big\|\bigr)+
\big\|H^{(l)}\big\| \big\|H^{(k-l)}\big\|\big) \nonumber \\
\hphantom{\big\|H^{(k)}\big\|}{} =\frac1{\rho}\left(2(k-1)\delta^2+3\delta\sum_{l=1}^{k-1}\big\|H^{(l)}\big\|
+\sum_{l=1}^{k-1}\big\|H^{(l)}\big\|\big\|H^{(k-l)}\big\|\right). \label{normestimate}
\end{gather}
Thus having $H^{(1)}=0$, $\big\|H^{(2)}\big\|\leqslant2\delta^2/\rho$, one obtains
\begin{gather*}
\big\|H^{(r)}\big\|\leqslant\delta P_{r-1}(\delta/\rho) \qquad\mbox{and}\qquad
\big|\lambda^j\big|\leqslant\delta (1+P_{r-1}(\delta/\rho) ),
\end{gather*}
where $P_{r-1}$ is a polynomial of degree $r-1$. Indeed, since one already has
\begin{gather*}
\big\|H^{(2)}\big\|\leqslant\delta P_1(\delta/\rho), \qquad P_1(x)=2x,
\end{gather*}
making an inductive assumption
\begin{gather*}
\big\|H^{(k)}\big\|\leqslant\delta P_{k-1}(\delta/\rho) \qquad\mbox{for}\quad k=2,3,\ldots,r-1
\end{gather*}
(where $P_{k-1}$ is a polynomial of degree $k-1$) we deduce from~\eqref{normestimate} the required estimate for~$\big\|H^{(r)}\big\|$:
\begin{gather*}
\big\|H^{(r)}\big\|\leqslant\frac1{\rho}\left(2(r-1)\delta^2+3\delta\sum_{l=2}^{r-1}\delta P_{l-1}(\delta/\rho)+
\sum_{l=2}^{r-2}\delta^2 P_{l-1}(\delta/\rho)P_{r-l-1}(\delta/\rho)\right) \\
\hphantom{\big\|H^{(r)}\big\|}{}
= \delta\left(2(r-1)\frac{\delta}{\rho}+\sum_{l=2}^{r-1}3\frac{\delta}{\rho} P_{l-1}(\delta/\rho)+
\sum_{l=2}^{r-2}\frac{\delta}{\rho} P_{l-1}(\delta/\rho)P_{r-l-1}(\delta/\rho)\right)\\
\hphantom{\big\|H^{(r)}\big\|}{}= \delta P_{r-1}(\delta/\rho). \tag*{\qed}
\end{gather*}
\renewcommand{\qed}{}
\end{proof}

\begin{Remark}As follows from the above, the polynomials $P_k$ in the proof of Theorem~\ref{theorem5} are defined by the recurrence relations
\begin{gather*}
P_k(x)=2kx+\sum_{l=2}^k3x P_{l-1}(x)+\sum_{l=2}^{k-1}x P_{l-1}(x)P_{k-l}(x), \!\!\qquad k=2,\ldots,r-1, \!\qquad P_1(x)=2x,
\end{gather*}
in particular,
\begin{gather*}
	P_2(x)=4x+6x^2, \qquad P_3(x)=6x+18x^2+22x^3, \qquad\ldots, \\
	P_{r-1}(x)=2(r-1)x+3(r-1)(r-2)x^2+\dots+c_{r-1}x^{r-1}.
\end{gather*}
Therefore,
\begin{itemize}\itemsep=0pt
\item when $\varepsilon\rightarrow0$ and $\rho$ is fixed (or, more generally, $\varepsilon/\rho$ is bounded), then the leading term of the polynomial $\delta\bigl(1+P_{r-1}(\delta/\rho)\bigr)$ is $\delta$, and the condition
\begin{gather}\label{cond}
\delta\bigl(1+P_{r-1}(\delta/\rho)\bigr)<\varepsilon
\end{gather}
implies an asymptotic estimate $\delta(\varepsilon)=O(\varepsilon)$;
\item when $\varepsilon\rightarrow0$ and $\rho\rightarrow0$ such that $\varepsilon/\rho$ is unbounded, then the leading term of the polynomial $\delta\bigl(1+P_{r-1}(\delta/\rho)\bigr)$ satisfying~(\ref{cond}) is $c_{r-1}\delta^r/\rho^{r-1}$, and hence asympotically
\begin{gather*}
\delta(\varepsilon,\rho)=O\bigl(\varepsilon^{1/r}\rho^{(r-1)/r}\bigr).
\end{gather*}
\end{itemize}
\end{Remark}

There are two simple corollaries of Theorem~\ref{theorem4}.

\begin{Corollary}[for systems with singular points of Poincar\'e rank 1]
In a set
\begin{gather*}
{\cal S}_{a_1,\ldots,a_n}^{\;\;1,\ldots,1}\bigl(1/n(p-1)\bigr)=\bigl\{ \big(B^{(0)}_i,B^{(1)}_i\big)_{i=1}^n \in
{\cal S}_{a_1,\ldots,a_n}^{\;\;1,\ldots,1} \,|\, \big\|B^{(1)}_i\big\|< 1/n(p-1) \bigr\},
\end{gather*}
systems solvable by generalized quadratures are those and only those whose matrices $B^{(0)}_i$, $B^{(1)}_i$, $i=1,\ldots,n$, are simultaneously conjugate to triangular ones.
\end{Corollary}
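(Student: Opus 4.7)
The plan is to derive this corollary from Theorem~\ref{theorem4} by specializing the argument of Theorem~\ref{theorem5} to the case $r=1$, which lets us pin down the universal constant $\delta=1/n(p-1)$ that is otherwise only implicit. Since every $B^{(0)}_i$ has pairwise distinct eigenvalues and the Poincar\'e rank is $r_i=1>0$, each $a_i$ is an irregular non-resonant singular point, so the generic hypothesis of Section~\ref{section2} holds and the Fuchsian-only condition $\lambda^j_i-\lambda^k_i\notin\mathbb{Q}\setminus\mathbb{Z}$ is vacuous. Consequently, to invoke Theorem~\ref{theorem1} it is enough to verify condition~(\ref{cond1}), i.e.\ $\operatorname{Re}\lambda_i^j>-1/n(p-1)$ for every $i,j$. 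The conclusion of Theorem~\ref{theorem1} is then precisely the assertion of the corollary.

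To verify the smallness of the formal exponents, I would trace the recursion from the proof of Theorem~\ref{theorem5} with $r=1$. At $k=1$ the sum $\sum_{l=1}^{k-1}(T^{(l)}A^{(k-l)}-B^{(k-l)}T^{(l)})$ is empty, and the term $(k-r)T^{(k-r)}$ vanishes because $k=r=1$; hence $H^{(1)}=0$ and the formula
\begin{gather*}
\lambda_i^j=A^{(1)}_{i,jj}=B^{(1)}_{i,jj}
\end{gather*}
holds in a (local) basis diagonalizing $B^{(0)}_i$. Since for the matrix $1$-norm (say) diagonal entries satisfy $|M_{jj}|\le\|M\|_1$, this yields $|\lambda_i^j|\le\|B^{(1)}_i\|_1<1/n(p-1)$, so in particular $\operatorname{Re}\lambda_i^j>-1/n(p-1)$, as required. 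This matches the Remark following Theorem~\ref{theorem5}, where for $r=1$ the bounding polynomial $\delta(1+P_{r-1}(\delta/\rho))$ collapses to $\delta$ with no $\rho$-dependence, so the admissible $\delta$ can be chosen uniformly across $W$.

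The main point requiring care is the passage between the two bases: the identity $\lambda_i^j=B^{(1)}_{i,jj}$ is derived in a basis where $B^{(0)}_i$ has already been diagonalized, whereas the hypothesis $\|B^{(1)}_i\|<1/n(p-1)$ is naturally imposed on the given matrix. Since the formal exponents are local invariants at $a_i$, one can do the diagonalization locally (by a constant gauge transformation acting near $a_i$) without affecting the $\lambda_i^j$; one must then argue that the matrix-norm bound survives this local change of basis, which is the essential obstacle in converting Theorem~\ref{theorem5}'s $D$-dependent $\delta(D,\varepsilon)$ into the uniform constant $1/n(p-1)$. Once this norm-comparison step is settled (via the $r=1$ collapse of the $\rho$-factors noted above), the three ingredients---genericity, smallness of formal exponents, and Theorem~\ref{theorem1}---combine to yield the corollary.
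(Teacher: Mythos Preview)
Your overall approach coincides with the paper's one-line argument: when $r_i=1$ the correction term $H^{(1)}$ in the splitting-lemma recursion vanishes, so the formal exponents are the diagonal entries of $B^{(1)}_i$ in the basis diagonalizing $B^{(0)}_i$, hence $|\lambda_i^j|\leqslant\|B^{(1)}_i\|$; Theorem~\ref{theorem1} then applies. The paper states exactly this inequality and nothing more; it does not discuss the basis issue you raise.

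Where your proposal goes wrong is the last paragraph. You correctly note that the identity $\lambda_i^j=B^{(1)}_{i,jj}$ is obtained only after $B^{(0)}_i$ has been diagonalized, whereas the hypothesis $\|B^{(1)}_i\|<1/n(p-1)$ refers to the matrix as given. But your proposed fix---that the ``$r=1$ collapse of the $\rho$-factors'' removes this dependence---does not work. The quantity $\rho=\min_{j\ne k}|\alpha^j-\alpha^k|$ in the proof of Theorem~\ref{theorem5} is introduced \emph{after} $B^{(0)}$ has already been assumed diagonal; it governs the size of the $T^{(k)}$ and $H^{(k)}$ in that basis, not the conditioning of the diagonalizing conjugation~$C$. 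That $\rho$ drops out of the $r=1$ bound says nothing about how $\|B^{(1)}_i\|$ behaves under $B^{(1)}_i\mapsto C^{-1}B^{(1)}_iC$. Concretely, with $B^{(0)}=\bigl(\begin{smallmatrix}0&1\\ \epsilon^2&0\end{smallmatrix}\bigr)\in W$ and $B^{(1)}=\bigl(\begin{smallmatrix}0&0\\ 1&0\end{smallmatrix}\bigr)$, one has $\|B^{(1)}\|_1=1$ while the formal exponents are $\pm 1/(2\epsilon)$, unbounded as $\epsilon\to0$. So the inequality $|\lambda_i^j|\leqslant\|B^{(1)}_i\|$ is not basis-free, and the ``norm-comparison step'' cannot be settled by the mechanism you invoke. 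The paper's proof simply inherits the standing normalization $B^{(0)}={\rm diag}(\alpha^1,\ldots,\alpha^p)$ from the proof of Theorem~\ref{theorem5}; under that reading the diagonal-entry bound is immediate and no further comparison is needed.
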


\begin{proof} If all the Poincar\'e ranks $r_i=1$, then $\big|\lambda_i^j\big|\leqslant\big\|B^{(1)}_i\big\|$ and thus one has no necessity to restrict the leading terms $B^{(0)}_i$ on some disc $D\Subset W$ and may take any $B^{(0)}_i\in W$.
\end{proof}

\begin{Corollary}\label{corollary2} System \eqref{syst2} with the coefficient matrix $B$ of the form
\begin{gather*}
B(z)=\sum_{i=1}^n\frac{B^{(0)}_i}{(z-a_i)^{r_i+1}}, \qquad B^{(0)}_i\in W, \qquad r_i>0,
\end{gather*}
is solvable by generalized quadratures if and only if all the matrices $B^{(0)}_1,\ldots,B^{(0)}_n$ are simultaneously conjugate to triangular ones.
\end{Corollary}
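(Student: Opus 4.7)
The plan is to derive this corollary directly from Theorem~\ref{theorem1}, once we observe that the sparse shape of $B(z)$ forces every formal exponent at every singular point to vanish exactly.

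First I would fix a singular point $a_i$ of positive Poincar\'e rank $r_i$ and apply the splitting-lemma recursion from the proof of Theorem~\ref{theorem5} to the local expansion of $B$ at $a_i$. In the present setting the non-leading coefficient matrices $B_i^{(1)},\ldots,B_i^{(r_i)}$ are all zero, so setting $\delta=0$ in the inductive estimate~(\ref{normestimate}) and starting from $H^{(1)}=0$, a straightforward induction yields $H^{(k)}=0$ for every $k\leqslant r_i$. Consequently
\begin{gather*}
\lambda_i^j=B^{(r_i)}_{jj}-H^{(r_i)}_{jj}=0,\qquad j=1,\ldots,p,
\end{gather*}
so \emph{every} formal exponent of the system is exactly zero.

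Next I would verify that the hypotheses of Theorem~\ref{theorem1} are met. Since $r_i>0$ and $B_i^{(0)}\in W$ has pairwise distinct eigenvalues, each $a_i$ is non-resonant irregular, so the genericity assumption holds; as there are no Fuchsian singular points, the side condition $\lambda_i^j-\lambda_i^k\not\in\mathbb Q\setminus\mathbb Z$ is vacuous, and the smallness condition~(\ref{cond1}) reduces to $\operatorname{Re}\lambda_i^j>-1/n(p-1)$, which is trivially satisfied.

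Finally, Theorem~\ref{theorem1} then gives that the system is solvable by generalized quadratures if and only if there exists $C\in\mathrm{GL}(p,\mathbb C)$ such that every $CB_i^{(l)}C^{-1}$ is upper triangular. Since $B_i^{(l)}=0$ for $l\geqslant1$, this requirement reduces precisely to the simultaneous triangularizability of the leading matrices $B_1^{(0)},\ldots,B_n^{(0)}$, which is the claimed criterion. The only nontrivial observation is the \emph{exact} (not merely approximate) vanishing of the formal exponents, which requires nothing beyond running the Theorem~\ref{theorem5} recursion with $\delta=0$; there is no further technical obstacle.
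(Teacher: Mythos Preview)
Your argument is correct and coincides with the paper's own proof: the paper simply notes that when $B_i^{(1)}=\cdots=B_i^{(r_i)}=0$ all formal exponents vanish, so Theorem~\ref{theorem1} applies with no restriction on the leading terms $B_i^{(0)}$. Your write-up merely spells out in detail (via the recursion of Theorem~\ref{theorem5} with $\delta=0$) what the paper asserts in one line.
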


\begin{proof} In the case $B^{(1)}_i=\dots=B^{(r_i)}_i=0$, $i=1,\ldots,n$, all the formal exponents equal zero, and there is again no need to restrict the matrices~$B^{(0)}_i$ on some disc $D\Subset W$.
\end{proof}

\section{Practical remarks and examples}\label{section4}

We conclude by noting that the results presented in this paper can be turned into an algorithm which allows to check the solvability of a given concrete system. The main steps can be summarized as follows.
\begin{enumerate}	\itemsep=0pt
	\item Compute the formal exponents at each singular point. 	The practical calculation of the formal exponents presented in the previous section, is already implemented in Maple as a~part of the general program ISOLDE \cite{BP} based on algorithms from~\cite{Ba}.
	\item Check the required smallness of the formal exponents of the system under consideration.
	\item The simultaneous triangularizability of a set of matrices is equivalent to the solvability of the Lie algebra generated by them. This can be checked using the package {\it LieAlgebras} of Maple.\footnote{Note that checking the simultaneous triangularizability of a given set of matrices can be done using more direct algorithms (that is, without recourse to Lie algebra computations). Those algorithms consist in trying first to find a common eigenvector of the input matrices and then to proceed recursively (this is a work in progress~\cite{BC}).}
\end{enumerate}

In order to illustrate the various steps of our algorithm we give two examples of computations performed by using our implementation in Maple.

\begin{Example} We consider a $(3\times3)$-system (\ref{syst2}) with the coefficient matrix
\begin{gather*}
B(z)= M_1/z^3+M_4/z+M_2/(z-1)^2+M_5/(z-1)+M_3/(z+1)^2+M_6/(z+1),
\end{gather*}
where the matrices $M_i$ are given by
\begin{gather*}
M_1=\left( \begin{matrix} -5&-4&-4\\ 17&14&13
\\ -10&-8&-7\end{matrix} \right), \qquad M_2= \left( \begin{matrix} -6&-5&-5\\ 23&17&15
\\ -14&-9&-7\end{matrix} \right),
\qquad M_3= \left(\begin{matrix}1&1&1\\ -11&-7&-6
\\ 8&4&3\end{matrix} \right),
\\
M_4=\left(\begin{matrix} 2 a-c&a-c&a-c\\ -3-6 a
+5 c&-2-3 a+5 c&-2-3 a+5 c\\ 3+4 a-3 c&2+2 a
-3 c&2+2 a-3 c\end{matrix}\right),
\\
M_5=\left(\begin{matrix} 0&0&0\\ b+2&-b+1&-2 b+1
\\ -b-2&b-1&2 b-1\end{matrix}\right),
\\
M_6=\left(\begin{matrix} -2 a+c&-a+c&-a+c\\ -b+6
 a+1-5 c&b+3 a+1-5 c&2 b+3 a+1-5 c\\ b-4 a-1
+3 c&-b-2 a-1+3 c&-2 b-2 a-1+3 c\end{matrix} \right)
\end{gather*}
and $a$, $b$ and $c$ are parameters. It has three singular points $a_1=0$, $a_2=1$, $a_3=-1$ of Poincar\'e rank $r_1=2$, $r_2=1$, $r_3=1$, respectively. The point $z=\infty$ is non-singular, since $M_4+M_5+M_6=0$.

Our implementation allows to check that the three singularities are non-resonant and gives the formal exponents for each of them.
\begin{verbatim} > read "/Users/barkatou/Desktop/Spliting/split":
\end{verbatim}

We apply the splitting lemma at $z=a_1$ up to $k=2$.
\begin{verbatim}
> splitlemma(B, z=0, 2, T);
\end{verbatim}
We get the equivalent matrix
\begin{gather*}
\left[ \begin{matrix} -\dfrac1{z^3}+\dfrac az&0&0\\ 0&\dfrac1{z^3}&0\\ 0&0&\dfrac2{z^3}+
\dfrac cz\end{matrix} \right]+\mbox{regular part}.
\end{gather*}
The formal exponents at $z=a_1$ are then $\lambda_1^1=0$, $\lambda_1^2=a$, $\lambda_1^3=c$.
\medskip

We apply now the splitting lemma at $z=a_2$ up to $k=1$.
\begin{verbatim}
> splitlemma(B, z=1, 1, T);
\end{verbatim}
We get the equivalent matrix
\begin{gather*}
\left[ \begin{matrix} -\dfrac1{(z-1)^2}&0&0
\\ 0&\dfrac2{(z-1)^2}+\dfrac b{z-1}&0
\\ 0&0&\dfrac3{(z-1)^2}\end{matrix}
\right]+\mbox{regular part}.
\end{gather*}
The formal exponents at $z=a_2$ are then $\lambda_2^1=0$, $\lambda_2^2=0$, $\lambda_2^3=b$.

Finally, we apply the splitting lemma at $z=a_3$ up to $k=1$.
\begin{verbatim}
> splitlemma(B, z=-1, 1, T);
\end{verbatim}
We get the equivalent matrix
\begin{gather*}
\left[ \begin{matrix} - \dfrac {a}{z+1} &0&0\\ 0
&-\dfrac2{(z+1)^2}-\dfrac c{z+1}&0\\ 0
&0&-\dfrac1{(z+1)^2}-\dfrac b{z+1}\end{matrix} \right]+\mbox{regular part}.
\end{gather*}
The formal exponents at $z=a_3$ are then $\lambda_3^1=-a$, $\lambda_3^2=-c$, $\lambda_3^3=-b$.

Note that in our case ($p=3$, $n=3$) the condition (\ref{cond1}) of Theorem~\ref{theorem1} is as follows:
\begin{gather*}
\operatorname{Re}\lambda_i^j>-1/6, \qquad i,j=1,2,3.
\end{gather*}
This is equivalent here to the condition
\begin{gather*}
|\operatorname{Re}\lambda|<1/6, \qquad \mbox{for} \quad \lambda \in \{a,b,c\}.
\end{gather*}

Now let us check in which case the matrices $M_i$, $i=1,\dots,6$, are simultaneously triangularizable. For this, we can first use the {\em LieAlgebras} package of Maple.
\begin{enumerate}	\itemsep=0pt
	\item First one computes the Lie algebra $L$ generated by the set $S=\{M_1,\ldots,M_6\}$.
	\item Then one can use the {\em Query} function with the argument ``{\em Solvable}'' to check the solvability of~$L$.
\end{enumerate}
\begin{verbatim}
> with(DifferentialGeometry):
> with(LieAlgebras):
> S:=[seq(M_i, i = 1 .. 6)]:
> L := MatrixLieAlgebra(S);
\end{verbatim}
\begin{gather*}\left[ \left[ \begin{matrix} -5&-4&-4 \\ 17&14&13
	\\ -10&-8&-7 \end{matrix} \right] ,
	\left[\begin{matrix} -6&-5&-5 \\ 23&17&15
	\\ -14&-9&-7 \end{matrix} \right] ,
	\left[\begin{matrix} 1&1&1 \\ -11&-7&-6
	\\ 8&4&3 \end{matrix} \right],
	\left[ \begin{matrix} -1&-1&-1
	\\ 14&10&10 \\ -13&-9&-9
	\end{matrix} \right], \right.\\
	\left.\left[ \begin{matrix} 2 a-c&a-c&a-c \\ -3-6 a+5 c&-2-3 a+5 c&-2-3 a+5 c
	\\ 3+4 a-3 c&2+2 a-3 c&2+2 a-3 c
	\end{matrix} \right] , \left[ \begin{matrix} 0&0&0
	\\ b+2&-b+1&-2 b+1\\ -b-2&b-1&2 b-1
	\end{matrix}\right] \right].
 \end{gather*}
\begin{verbatim}
> LD1:=LieAlgebraData(L,Alg1):
> DGsetup(LD1):
We check the solvability of L
> Query("Solvable");
true
\end{verbatim}

This tells us that the Lie algebra $L$ is sovable (apparently for all $a$, $b$, $c$) but it does not give a transformation~$P$ that simultaneously triangularizes the matrices $M_i$. Such a transformation can be directly obtained using the implementation from~\cite{BC}.
\begin{verbatim}
> read "/Users/barkatou/Desktop/Simul-Triang/utilitaires_reduction.mpl":
> read "/Users/barkatou/Desktop/Simul-Triang/SimultTriang.mpl":
> P := SimultaneousTriangularization([seq(M[i], i = 1 .. 6)]);
\end{verbatim}
\begin{gather*}
P:=\left[ \begin{matrix} 0&-1&1\\ 1&1&0
\\ -1&0&0\end{matrix} \right].
\end{gather*}
One can check that indeed the matrices $P^{-1}M_iP$ are triangular:
\begin{verbatim}
> seq(1/P . M[i] . P, i = 1 .. 6);
\end{verbatim}
 \begin{gather*}\left[ \begin{matrix} 1&-2&10\\ 0&-1&7
	\\ 0&0&2\end{matrix} \right],
	\left[ \begin{matrix} 2&-5&14\\ 0&-1&9
	\\ 0&0&3\end{matrix} \right],
	\left[ \begin{matrix} -1&4&-8\\ 0&0&-3
	\\ 0&0&-2\end{matrix} \right], \\
	\left[ \begin{matrix} 0&1+2 a&-3-4 a+3 c\\ 0
	&a&-2 a+2 c\\ 0&0&c\end{matrix} \right]
	, \left[ \begin{matrix} b&-2 b-1&b+2\\ 0&0&0
	\\ 0&0&0\end{matrix} \right],
	\left[ \begin{matrix} -b&2 b-2 a&-b+4 a+1-3 c
	\\ 0&-a&2 a-2 c\\ 0&0&-c
	\end{matrix} \right].
	\end{gather*}

In conclusion, the system in this example is solvable in the Liouvillian sense without any restriction on the parameters $a$, $b$, $c$. On the other hand, the matrices $M_i$, $i=1,\dots,6$, are never simultaneously diagonalizable since, for example, $M_1$, $M_2$ do not commute:
\begin{gather*} [M_1,M_2]=\left( \begin{matrix} -1&-1&-1\\ 14&10&10
\\ -13&-9&-9\end{matrix} \right). \end{gather*}
Hence, due to Theorem~\ref{theorem2} we can assert that the system is not solvable by exponentials of integrals and algebraic functions whenever $|\operatorname{Re}\lambda|<1/6$, for $\lambda\in\{a,b,c\}$.
\end{Example}

\begin{Example} We consider a $(3\times3)$-system (\ref{syst2}) with the coefficient matrix
\begin{gather*}
B(z)= M_1/(z-a_1)^{1+r_1}+M_2/(z-a_2)^{1+r_2}+M_3/(z-a_3)^{1+r_3},
\end{gather*}
where the matrices $M_i$ are given by
\begin{gather*}
M_1=\left( \begin{matrix} 1&0&0\\ 0&-1&0
\\ 0&0&2\end{matrix} \right),
\qquad M_2 = \left( \begin{matrix} 0&0&0\\ 3 a&3+b&1
\\ -3 ab&-{b}^{2}-5 b&-2-b\end{matrix} \right),
\qquad M_3 = \left( \begin{matrix} -1&0&0\\ 0&4&0
\\ -2&0&1\end{matrix} \right),
\end{gather*}
$r_i$ are positive integers and $a$, $b$ are parameters. The matrix~$M_2$ has three distinct eigenvalues: $0$, $3$, $-2$.
Hence the system has three non-resonant singularities and is of the form assumed in Corollary~\ref{corollary2}.
\begin{verbatim}
> with(DifferentialGeometry):
> with(LieAlgebras):
> S:=[seq(M_i, i = 1 .. 3)]:
> L := MatrixLieAlgebra(S);
\end{verbatim}
\begin{gather*} \left[ \left[ \begin{matrix} 1&0&0\\ 0&-1&0
		\\ 0&0&2\end{matrix} \right] , \left[ \begin{matrix} 0&0&0\\ 3 a&3+b&1\\ -3 ab&
		-{b}^{2}-5 b&-2-b\end{matrix} \right] , \left[ \begin{matrix} -1
		&0&0\\ 0&4&0\\ -2&0&1\end{matrix}
		\right] , \left[ \begin{matrix} 0&0&0\\ -6 a&0
		&-3\\ -3 ab&-3 {b}^{2}-15 b&0\end{matrix}
		\right] ,\right.\\
		\left.\left[ \begin{matrix} 0&0&0\\ 0&0&0
		\\ -2&0&0\end{matrix} \right] , \left[
		\begin{matrix} 0&0&0\\ -15 a-2&0&-3
		\\ 6 ab+2 b+4&-3 {b}^{2}-15 b&0\end{matrix}
		\right] , \left[ \begin{matrix} 0&0&0\\ 12 a&0
		&9\\ -3 ab&-9 {b}^{2}-45 b&0\end{matrix} \right] \right].
\end{gather*}
\begin{verbatim}
> LD1:=LieAlgebraData(L,Alg1):
> DGsetup(LD1):
We check the solvability of L
> Query("Solvable");
false
\end{verbatim}
This should be interpreted as that the Lie algebra $L$ is not solvable (and hence our three matrices~$M_i$ are not simultaneously triangularizable) for {\em generic} values of $a$, $b$. Therefore, the system is not solvable by generalized quadratures for generic values of~$a$, $b$.

Our implementation \cite{BC} gives the same answer:
\begin{verbatim}
> read "/Users/barkatou/Desktop/Simul-Triang/utilitaires_reduction.mpl":
> read "/Users/barkatou/Desktop/Simul-Triang/SimultTriang.mpl":
> P := SimultaneousTriangularization([seq(M[i], i = 1 .. 3)]);

"The matrices are not simultaneously triangularizable!"
\end{verbatim}

Now one can check directly that for
$b=-5$ or $b=0$, the matrices $M_i$ are simultaneously triangularizable.
Indeed, if we consider the following matrix
\begin{gather*}
P:= \left[ \begin{matrix} 0&0&1\\ -9&0&0
\\ 0&1&0\end{matrix} \right]
\end{gather*}
and compute the matrices $P^{-1}M_iP$, we find that
\begin{verbatim}
> seq(1/P . M[i] . P, i = 1 .. 3);
\end{verbatim}
\begin{gather*}
\left[ \begin{matrix} -1&0&0\\ 0&2&0
\\ 0&0&1\end{matrix} \right],
\left[ \begin{matrix} 3+b&-1/9&-a/3\\ 9 b
 ( b+5 ) &-2-b&-3 ab\\ 0&0&0\end{matrix}
\right],
\left[ \begin{matrix} 4&0&0\\ 0&1&-2
\\ 0&0&-1\end{matrix} \right],
\end{gather*}
from which we see that the matrices~$M_i$ are simultaneously triangularizable when $b(b+5)=0$. This means that for $b=-5$ or $b=0$, the system is solvable by generalized quadratures and, on the other hand, is not solvable by exponentials of integrals and algebraic functions (since the matrices $M_i$ are not simultaneously diagonalizable: $[M_1,M_2]\ne0$).
\end{Example}

Note that for this kind of examples where the matrices~$M_i$ depend on some parameters, one question naturally arises, namely: {\it is it possible to find the equations for the solvability locus in the parameter space?}

The answer to this question is positive. Indeed, one can use {\em Cartan's criterion for solvabi\-li\-ty}~\cite{Hu72} which states that a matrix Lie algebra $\mathfrak{g}$ is solvable (in characteristic zero) if and only if $K(\mathfrak{g}, [\mathfrak{g},\mathfrak{g}]) =0$, where $K$ designates the Killing form\footnote{$K(u,v)=\operatorname{Tr}({\rm ad}(u){\rm ad}(v))$.} of~$\mathfrak{g}$. This is also equivalent to $\operatorname{Tr}(uv)=0$ for all $u \in \mathfrak{g}$ and all $v \in [\mathfrak{g},\mathfrak{g}]$, the derived Lie algebra of $\mathfrak{g}$.

In order to check in practice that a given Lie algebra $\mathfrak{g}$ is solvable, one could proceed as follows.
Let $\{u_1, \dots, u_r\}$, resp. $\{v_1, \dots, v_s\}$, be a basis of $\mathfrak{g}$, resp. of $[\mathfrak{g},\mathfrak{g}]$. Using the package {\it LieAlgebras} of Maple one can compute $f_{ij}:=K(u_i,v_j)$ for $i =1,\dots, r$, $j=1,\dots, s$, and check whether all the $f_{ij}$'s are zero or not. In the case when the elements of $\mathfrak{g}$ are matrices, say, with coefficients in ${\mathbb C}(p_1, \dots, p_t)$ for some parameters $p_1, \dots, p_t$, the $f_{ij}$'s are then rational functions of $p_1, \dots, p_t$ and the solvability locus in the parameter space is given by the zero set of the numerators of the $f_{ij}$'s. In our example, the Maple computation gives the solvability locus determined by $b(b+5)=0$.

{\bf Concluding remark.} As we have already mentioned in Introduction, in practice, to check the solvability by generalized quadratures of a given linear differential system ${\rm d}y/{\rm d}z =B(z)y$, it is sufficient (but might be very costly) to compute the Lie algebra of the corresponding differential Galois group (for example, by the algorithm in~\cite{BCDW16}) and this works, in theory at least, for a general system. Now for the class of systems with small coefficients it is much better (from the computational point of view) to apply the method of the present paper, namely computing the Lie algebra defined by the coefficients of the matrix~$B(z)$. Indeed, let us denote by~$\mathfrak{g} \subset {\mathfrak{gl}}_n({\mathbb C})$ the Lie algebra of the differential Galois group of our system and by~$\mathfrak{h}$ the Lie algebra generated by the coefficient matrix. The latter is defined as follows: if $B(z) = f_1M_1 + \dots+ f_sM_s$, where the~$M_i$'s are constant matrices and the $f_i$'s are rational functions in ${\mathbb C}(z)$ that form a basis of the $\mathbb C$-vector space generated by the entries of~$B(z)$, then $\mathfrak{h}$ is the matrix Lie algebra generated by the~$M_i$'s. One has that $\mathfrak{g}$ is always a Lie subalgebra of $\mathfrak{h}$. In general this inclusion is strict, and when the equality holds the differential system is called in {\em reduced form} (see~\cite{BCDW16} and references therein). Now, due to the Kolchin--Kovaci\'c theorem, there exists a gauge transformation $y = P \tilde{y}$, with $P$ having its entries in the algebraic closure of~${\mathbb C}(z)$, that takes the given system ${\rm d}y/{\rm d}z = B(z) y$ into an equivalent one ${\rm d}\tilde{y}/{\rm d}z=\tilde{B}(z) \tilde{y}$ that is in reduced form. The algorithm in~\cite{BCDW16} computes such a transformation and hence gets $\mathfrak{g}$ as the Lie algebra defined by $\tilde{B}(z)$. However computing a reduced form is not an easy task at all, since this is equivalent to finding the differential Lie algebra~$\mathfrak{g}$. Now, for systems with small coefficients what we propose is considering the set of the matrices $B_i^{(j)}$ involved in the partial fraction decomposition of~$B(z)$ and testing whether they are
simultaneously triangularizable over~$\mathbb C$ or not. For this (even if it is not necessary) one can form the Lie algebra $\mathfrak{h}'$ generated by those matrices and test its solvability. It is not difficult to see that in fact $\mathfrak{h}' = \mathfrak{h}$ (as defined above). Hence for systems with small coefficients there is no need to compute the differential Lie algebra~$\mathfrak{g}$, in other words there is no need to compute a reduced form of the system, one has only to check the solvability of $\mathfrak{h}'$ which is obtained directly from the coefficients of the system.

\subsection*{Acknowledgements}

The authors are grateful to Thomas Cluzeau for helpful discussions about the problem of simultaneous triangularizability of a set of matrices. They also thank the referee for his/her nice suggestions which have refined the text. The work of R.G.\ was partially supported by the Russian Foundation for Basic Research (projects 16-51-1500005 and 17-01-00515).

\pdfbookmark[1]{References}{ref}
\LastPageEnding

\end{document}